%
\documentclass[12pt, reqno]{amsart}
\usepackage{amsmath, amsthm, amscd, amsfonts, amssymb, graphicx, color}
\usepackage[bookmarksnumbered, colorlinks, plainpages]{hyperref}
\hypersetup{colorlinks=true,linkcolor=red, anchorcolor=green, citecolor=cyan, urlcolor=red, filecolor=magenta, pdftoolbar=true}

\textheight 22.7truecm \textwidth 15.2truecm
\setlength{\oddsidemargin}{0.35in}\setlength{\evensidemargin}{0.35in}

\setlength{\topmargin}{-.5cm}

\newtheorem{theorem}{Theorem}[section]
\newtheorem{lemma}[theorem]{Lemma}

\theoremstyle{definition}
\newtheorem{definition}[theorem]{Definition}

\theoremstyle{remark}

\numberwithin{equation}{section}

\begin{document}

\setcounter{page}{1}

\title[Semilinear heat equation with forcing term]{The Fujita exponent for a semilinear heat equation with forcing term on Heisenberg Group}

\author[M. B. Borikhanov, M. Ruzhansky and B. T. Torebek]{Meiirkhan B. Borikhanov, Michael Ruzhansky and  Berikbol T. Torebek$^*$}

\address{{Meiirkhan B. Borikhanov \newline Khoja Akhmet Yassawi International Kazakh--Turkish University \newline Sattarkhanov ave., 29, 161200 Turkistan, Kazakhstan \newline Institute of Mathematics and Mathematical Modeling \newline 125 Pushkin str., 050010 Almaty, Kazakhstan}}
\email{meiirkhan.borikhanov@ayu.edu.kz}
\address{{Michael Ruzhansky  \newline Department of Mathematics: Analysis, Logic and Discrete Mathematics \newline Ghent University, Belgium\newline School of Mathematical Sciences \newline Queen Mary University of London, United Kingdom}}
\email{michael.ruzhansky@ugent.be}
\address{{Berikbol T. Torebek \newline Institute of
Mathematics and Mathematical Modeling \newline 125 Pushkin str.,
050010 Almaty, Kazakhstan \newline Department of Mathematics: Analysis, Logic and Discrete Mathematics \newline
Ghent University, Belgium}}
\email{{berikbol.torebek@ugent.be}}

\thanks{$^*$ Corresponding author}

\subjclass[2020]{35R03, 35B33, 35B51, 35B44.}

\keywords{semilinear heat equation, blow-up and global solution, Heisenberg group.}

\begin{abstract} In this paper, we study a critical exponent to the semilinear heat equation with forcing term on Heisenberg group. Our technique of proof is based on methods of nonlinear capacity estimates specifically adapted to the
nature of the Heisenberg group. Surprisingly, the critical exponent will be bounded for all dimensions of the Heisenberg group, in contrast to the Euclidean case which in the 1D and 2D cases will be infinite. In addition, we give an upper bound estimate on the lifespan of local solutions.\end{abstract}
\maketitle
\tableofcontents
\section{Introduction}

We consider the following semilinear heat equation with a forcing term $f=f(\eta)$ depending on the space in the Heisenberg group
\begin{equation}\label{02}
\left\{\begin{array}{l}\large\displaystyle
u_t-\Delta_\mathbb{H}u=|u|^{p}+f,\,\ (t,\eta)\in(0,T)\times\mathbb{H}^{N},\\{}\\
u(0,\eta)=u_0(\eta),\, \eta\in\mathbb{H}^{N},\end{array}\right.\end{equation}where $p>1$ and $\Delta_\mathbb{H}$ is the sub–Laplacian operator.

At first, the problem \eqref{02} for $f\equiv0$ has studied by Pohozaev and Veron \cite{Pohozaev}, Georgiev and Palmieri \cite{ Georgiev}, and on nilpotent Lie groups by Pascucci in \cite{Pascucci}, and for general locally compact groups in \cite{Ruzhansky}. 
 
Mainly, it is proven that: 
\begin{itemize}
\item[(i)] (see \cite{Pascucci, Pohozaev}) if $1<p\leq{p}_{F}=1 + \frac{2}{Q}$, then the problem \eqref{02} admits no global positive solutions;
\item[(ii)] (see \cite{Pascucci, Georgiev}) if $p>{p}_{F}=1 + \frac{2}{Q},$ then for sufficiently small initial data, the problem \eqref{02} admits positive global solutions.
\end{itemize}
Here $Q = 2N + 2$ is the homogeneous dimension of Heisenberg group $\mathbb{H}^N$.

The above results for the subcritical ($1<p<p_F$) and supercritical ($p>p_F$) cases on Euclidean space $\mathbb{R}^N$ were first obtained by Fujita in \cite{Fujita1} and it is shown that the critical exponent is ${{p}_{F}=1+\frac{2}{N}}$.

In the critical case, $p=p_F$, this problem was studied by Hayakawa in \cite{Hayakawa} for $N=1,2$ and in \cite{Kobayashi, Weissler} for arbitrary $N$. It was shown that there is no nonnegative global solution for any nontrivial nonnegative initial data. 

The number $${{p}_{F}=1+\frac{2}{N}},$$ is called the critical Fujita exponent, which separates the nonexistence and existence of global in time solutions of semilinear heat equations.

We also note that the Fujita-type exponents to semilinear equations of type \eqref{02} on the Heisenberg groups were studied in \cite{Kirane 1, Kirane 2, Kirane 4, D'Ambrosio, D'Ambrosio1}.

Recently, the authors of \cite{Ruzhansky} have studied the following Cauchy problem on a sub-Riemannian manifold $M$
\begin{equation}\label{06}
\left\{\begin{array}{l}\large\displaystyle
u_t-\mathcal{L}_Mu=f(u),\,\ \,t>0,\,x\in M,\\{}\\
u(0,x)=u_0,\, x\in M,\end{array}\right.\end{equation}
for $u_0\geq0$ and $f(u) \simeq u^p$, where $\mathcal{L}_M$ is a sub-Laplacian on $M$.

They showed that the Fujita critical exponent is $$p_F=1+\frac{2}{D},$$
where $D$ is the global dimension for $M$ a connected unimodular Lie group.

In \cite{Bandle}, Bandle et. al. have studied the problem \eqref{02} on $\mathbb{R}^N,$ i.e
\begin{equation}\label{0.2}\left\{\begin{array}{l}
{{u}_{t}}-\Delta u={{|u|}^{p}}+f(x),\,\,\,(t,x)\in (0,\infty)\times\mathbb{R}^N , \\{}\\
u\left( 0,x \right)=u_0\left( x \right),\,\,\,x\in \mathbb{R}^N. \end{array}\right.\end{equation}
They have obtained the following results:
\begin{itemize}
\item[(i)]  If $1<p<{p}_{S}$ and $\large\displaystyle\int_{\mathbb{R}^N}f(x)dx >0$, where 
\begin{equation*} p_S=\left\{\begin{array}{l}
\infty\,\,\,\text{if}\,\,\,N=1,2, \\{}\\
\large\displaystyle\frac{N}{N-2}\,\,\,\text{if}\,\,\,N\geq3, \end{array}\right.\end{equation*}then \eqref{0.2} does not admit global in time solutions.
\item[(ii)] If $N\geq3$ and $p>{p}_{S}$, then for any $\delta>0$, there exists $\varepsilon>0$ such that \eqref{0.2} has global solutions provided that $$\max\{|f(x)|, |u_0(x)|\}\leq \frac{\varepsilon}{(1+|x|^{N+\delta})}$$ regardless of whether or not $\large\displaystyle\int_{\mathbb{R}^N}f(x)dx >0$.
\item[(iii)] If $N\geq3$, $p={p}_{S}$, $\large\displaystyle\int_{\mathbb{R}^N}f(x)dx >0$, $f(x)={O}(|x|^{-\varepsilon-N})$ as $|x|\to\infty$ for some $\varepsilon>0$, and either $u\geq0$ or 
$$\int_{|x|>R}\frac{f^-(y)}{|x-y|^{N-2}}dy=o(|x|^{-N+2}),$$
\end{itemize}
when $R$ is enough large, then \eqref{0.2} has no global solutions. Here, $f^-=\max\{-f, 0\}$.

Then, Jleli et. al. in \cite{Jleli}, have extended the results of \cite{Bandle}, by studying a semilinear parabolic equation with forcing terms depending on time and space.

The main purpose of this paper is to extend the results from \cite{Bandle} to Heisenberg groups, that is, to determine the critical exponent of problem \eqref{02}.
In particular, we prove the critical case (iii) without additional conditions on $f$. Our technique of proof is based on methods of nonlinear capacity estimates specifically adapted to the nature of the Heisenberg group. We will also give an upper bound estimate on the lifespan of local solutions.

\section{Preliminaries}
In this section let us briefly recall the necessary notions in the setting of the Heisenberg group.

The Heisenberg group $\mathbb{H}^{N}$, whose elements are $\eta=(x, y, \tau)$, where $x, y\in \mathbb{R}^N$ and $\tau\in \mathbb{R}$, is a two-step nilpotent Lie group $(\mathbb{R}^{2N+1}, \circ)$ with the group multiplication defined by
\begin{equation*}\label{HMF}
\eta\circ\eta'=(x+x', y+y', \tau+\tau'+2(\langle x,y'\rangle-\langle x',y\rangle)),
\end{equation*}
with $\langle \cdot,\cdot\rangle$ the usual inner product in $\mathbb{R}^{N}$.

The distance from $\eta=(x, y, \tau)\in \mathbb{H}^{N}$ to the origin is given by
\begin{equation}\label{HDO}|\eta|_{\mathbb{H}}=\left(\left(\sum_{i=1}^{N}(x_i^2+y_i^2)\right)^2+\tau^2\right)^{1/4}=\left(\left(|x|^2+|y|^2\right)^2+\tau^2\right)^{1/4}.\end{equation}

The dilation operation on the
Heisenberg group with respect to the group law is
\begin{equation*}\label{HD}
\delta_\lambda(\eta)=(\lambda x, \lambda y, \lambda^2\tau)\,\,\,\text{for}\,\,\,\lambda>0,    
\end{equation*}
whose Jacobian determinant is $\lambda^Q$, where $Q=2N+2$ is the homogeneous dimension of $\mathbb{H}^{N}$.

The Lie algebra $\mathfrak{h}$ of the left-invariant vector fields for $1\leq i\leq N$ on the Heisenberg group $\mathbb{H}^{N}$ is spanned by
\begin{equation*}\label{LIVF}\begin{split}
X_i:=\frac{\partial}{\partial x_i}+2y_i\frac{\partial}{\partial t},\\
Y_i:=\frac{\partial}{\partial y_i}-2x_i\frac{\partial}{\partial t},
\end{split}
\end{equation*}
with their (non-zero) commutator
\begin{equation*}\label{NZC}
[X_i,Y_i]=-4\frac{\partial}{\partial t}.    
\end{equation*}

The sub-Laplacian is defined by
\begin{equation*}\begin{split}\label{HLO}\Delta_{\mathbb{H}}&=\sum_{i=1}^N\left(X_i^2+Y_i^2\right).
\end{split}\end{equation*}

An explicit computation gives the expression
\begin{equation}\label{HLO1}\begin{split}\Delta_{\mathbb{H}}&=\sum_{i=1}^N\left(\partial^2_{x_ix_i}+\partial^2_{y_iy_i}+4y_i\partial^2_{x_i\tau}-4x_i\partial^2_{y_i\tau}+4(x_i^2+y_i^2)\partial^2_{\tau\tau}\right)\\&=\Delta_{(x,y)}+4|(x,y)|^2\partial^2_{\tau\tau}+4\sum_{i=1}^N\left(y_i\partial^2_{x_i\tau}-x_i\partial^2_{y_i\tau}\right),\end{split}\end{equation} where $\Delta_{(x,y)}$ and $|(x,y)|^2$ denote the Laplace operator and the Euclidian norm of $(x,y)$ in $\mathbb{R}^{2N}$, respectively.

The operator $\Delta_{\mathbb{H}}$ satisfies:
\begin{description}
 \item[(a)] It is invariant with respect to the left multiplication in the group: for all $\eta, \eta'\in\mathbb{H}^{N}$ we have
 $$\Delta_{\mathbb{H}}(u(\eta\circ\eta'))=\Delta_{\mathbb{H}}u(\eta\circ\eta').$$
\item[(b)] It is homogeneous with respect to the dilation in the group: for all $\lambda>0$ we have
 $$\Delta_{\mathbb{H}}(u(\lambda x, \lambda y, \lambda^2\tau))=\lambda^2(\Delta_{\mathbb{H}}u)(\lambda x, \lambda y, \lambda^2\tau).$$
  \item[(c)] Let the
function $\varphi(\eta)$ have partial symmetry with respect to origin 
$$\varphi(\eta)=\varphi(x,y,\tau)=\Tilde{\varphi}(|z|_{\mathbb{R}^{2N}},\tau),\,\,\,\,\,\eta\in \mathbb{H}^N,$$ then in view of \cite{Garofalo}
\begin{equation}\label{SS1}
\Delta_{\mathbb{H}}\varphi(x,y,\tau)=\left(\frac{\partial^2}{\partial r^2}+\frac{2N-1}{ r}\frac{\partial}{\partial r}+4r^2\frac{\partial^2}{\partial \tau^2}\right)\Tilde{\varphi}(r,\tau),\,\,\,r=|z|_{\mathbb{R}^{2N}}.
\end{equation}
\item[(d)] Moreover, if $u(\eta)=\varphi(|\eta|_{\mathbb{H}})$, then from \cite{Birindelli}, we have
\begin{equation}\label{AZQ}
\Delta_{\mathbb{H}}\varphi(r)=\frac{\sum_{i=1}^{n}(x_i^2+y_i^2)}{r^2}\biggl(\frac{d^2\varphi}{dr^2}+\frac{Q-1}{r}\frac{d\varphi}{dr}\biggr),  \end{equation}where $r=|\eta|_{\mathbb{H}}$, $Q=2N+2$ is the homogeneous dimension of $\mathbb{H}^{N}$. 
\end{description}

Since $(-\Delta_{\mathbb{H}})$ is a self-adjoint operator, we
have for all $u, v\in H^2(\mathbb{H}^N)$
\begin{equation}\label{IBP}
\int_{\mathbb{H}^N}\left[(-\Delta_{\mathbb{H}})u(\eta)\right]v(\eta)d\eta= \int_{\mathbb{H}^N}u(\eta)\left[(-\Delta_{\mathbb{H}})v(\eta)\right]d\eta.   
\end{equation}

\begin{lemma}\label{TF} Let $f\in L^1(\mathbb{H}^N)$ and $\large\displaystyle\int\limits_{\mathbb{H}^N}fd\eta>0.$ Then there exists a test function $0\leq\psi\leq1$
such that
$$\int\limits_{\mathbb{H}^N}f\psi d\eta>0.$$
\end{lemma}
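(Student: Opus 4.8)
The plan is to prove this by a density/approximation argument combined with a dominated convergence step. The intuition is that since $\int_{\mathbb{H}^N} f\, d\eta > 0$, we can find a single compactly supported smooth bump that captures most of that positive mass while the contribution of $f$ outside a large ball is negligible.

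First I would fix a family of standard cut-off functions. Let $\chi \in C_c^\infty(\mathbb{H}^N)$ with $0 \le \chi \le 1$, $\chi \equiv 1$ on the unit ball $B_1 = \{|\eta|_{\mathbb{H}} \le 1\}$, and $\mathrm{supp}\,\chi \subset B_2$; then set $\psi_R(\eta) := \chi(\delta_{1/R}(\eta))$, so that $0 \le \psi_R \le 1$, $\psi_R \equiv 1$ on $B_R$, and $\mathrm{supp}\,\psi_R \subset B_{2R}$. Each $\psi_R$ is an admissible test function. Next I would observe that $f \psi_R \to f$ pointwise a.e.\ as $R \to \infty$ and $|f\psi_R| \le |f| \in L^1(\mathbb{H}^N)$, so by the dominated convergence theorem $\int_{\mathbb{H}^N} f \psi_R\, d\eta \to \int_{\mathbb{H}^N} f\, d\eta > 0$. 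Hence there exists $R_0$ large enough that $\int_{\mathbb{H}^N} f \psi_{R_0}\, d\eta > 0$, and $\psi := \psi_{R_0}$ is the desired test function.

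A small point worth addressing: the statement says "test function," which in this paper's context (nonlinear capacity method) presumably means a smooth, compactly supported function; the functions $\psi_R$ constructed above are exactly of this type since $\chi$ is smooth and $\delta_{1/R}$ is a smooth diffeomorphism, so smoothness and compact support are preserved. If instead one only needs $\psi$ measurable with $0 \le \psi \le 1$, the argument is unchanged (and even simpler).

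The only real obstacle here is cosmetic rather than mathematical: one must make sure that the truncation functions used are genuinely compatible with whatever regularity class of test functions is invoked later in the nonlinear capacity estimates (e.g.\ that one can further compose with time cut-offs and raise to powers without losing admissibility). Since $C_c^\infty$ is closed under all these operations, there is no difficulty; the lemma is essentially a routine application of dominated convergence once the dilation-rescaled bump functions are set up. I would therefore keep the proof short, emphasizing the construction of $\psi_R$ and the limit $\int f\psi_R \to \int f$.
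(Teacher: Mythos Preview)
Your argument is correct and is essentially the same as the paper's own proof: both construct a dilation-rescaled bump $\psi_R$ that is identically $1$ on a large ball and then pass to the limit $R\to\infty$ to conclude that $\int_{\mathbb{H}^N} f\psi_R\,d\eta \to \int_{\mathbb{H}^N} f\,d\eta>0$, so some fixed large $R$ works. The only cosmetic difference is that you invoke dominated convergence directly, while the paper writes out the equivalent tail estimate $\int_{|\eta|\ge R}|f|\,d\eta\to 0$; your use of the Heisenberg dilation $\delta_{1/R}$ is in fact slightly cleaner than the paper's Euclidean-style rescaling.
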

\begin{proof}[Proof of Lemma \ref{TF}.] The proof will be done using the same technique as in \cite[Lemma 3.1]{Pokhozaev1} for the Heisenberg group. Here, we recall that $\eta=(x,y,\tau)\in\mathbb{R}^{2N+1}\equiv\mathbb{H}^N.$
\\Define a function  $\displaystyle\psi_R=\psi\biggl(\frac{\eta}{R}\biggr),\,0\leq\psi_R\leq1,$ such that $\psi_R(\eta)\equiv1$ for $|\eta|\leq R.$
Consequently, we have an identity in the following form
\begin{equation*}
\int\limits_{\mathbb{R}^{2N+1}}f\psi d\eta=\int\limits_{|\eta|\leq R}f\psi d\eta+\int\limits_{R\leq|\eta| }f\psi d\eta.    
\end{equation*}
Then, using the above property of the test function $\psi_R$, we obtain
\begin{equation}\label{L2}
\int\limits_{\mathbb{R}^{2N+1}}f\psi_R d\eta=\int\limits_{|\eta|\leq R}f d\eta+\int\limits_{R\leq|\eta| }f\psi_R d\eta.
\end{equation}
In view of the convergence of the integral $\displaystyle\int\limits_{\mathbb{R}^{2N+1}}|f(\eta)| d\eta$, we have
$$\biggl|\int\limits_{R\leq|\eta| }f\psi_R d\eta\biggr|\leq \int\limits_{R\leq|\eta| }|f| d\eta\to0,\,\,\text{as}\,\,\,R\to+\infty. $$
Finally, passing to the limit $R\to+\infty$ in \eqref{L2}, we deduce that
$$\lim_{R\to\infty}\int\limits_{\mathbb{R}^{2N+1}}f\psi_R d\eta=\lim_{R\to\infty}\int\limits_{R\leq|\eta| }f d\eta=\int\limits_{\mathbb{R}^{2N+1}}fd\eta>0,$$ which completes the proof, with the test function $\psi_R$ for a sufficiently large $R$.
\end{proof}

\section{Local existence of the solution}
In this section we derive the local existence of the solution.
\begin{definition}[Mild solution]\label{DMS1} A local mild solution of the problem \eqref{02} is a function $u\in C([0, T];\,L^\infty({\mathbb{H}^N}))$ such that
\begin{equation}\label{MS2}\begin{split}
u(t)=e^{-t\Delta_{\mathbb{H}}}u_0+\int_0^te^{-(t-s)\Delta_{\mathbb{H}}}(|u(s)|^p+f)ds,\end{split}\end{equation}for any $t\in[0,T).$ If $T=+\infty,$ then $u$ is a global mild solution of \eqref{02}.
\end{definition}

\begin{theorem}[Local existence]\label{T4.2} Let $p>1$ and $u_0 \in L^\infty({\mathbb{H}^N}).$  
\begin{itemize}
\item[(i)] There exists a time $0<T<\infty$ such that the problem \eqref{02} has a unique mild solution $u\in C([0, T];\,L^\infty({\mathbb{H}^N}))$.
\item[(ii)] The solution $u$ can be extended to a maximal interval $[0, T_{\max})$, where \\$0<T_{\max}\leq\infty$. Furthermore, if $T_{\max}<\infty$, then $\lim_{t\to T_{\max}^-}\|u(t)\|_{L^\infty{(\mathbb{H}^N})}=\infty$.
\item[(iii)] If $u_0, f\in L^r(\mathbb{H}^N)$, $r\in[1;\infty)$, then $$u\in C([0, T_{\max});\,L^\infty({\mathbb{H}^N}))\cap C([0, T_{\max});\,L^r({\mathbb{H}^N})).$$
\end{itemize} \end{theorem}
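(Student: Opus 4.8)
The plan is to prove all three parts by a standard contraction-mapping argument on the integral (Duhamel) formulation \eqref{MS2}, exploiting the fact that $e^{-t\Delta_{\mathbb{H}}}$ is a contraction semigroup on $L^\infty(\mathbb{H}^N)$ (and on every $L^r(\mathbb{H}^N)$, $1\le r<\infty$), by left-invariance and the positivity of the heat kernel on the Heisenberg group. First I would fix $M:=2\|u_0\|_{L^\infty}$ and define, for $T>0$ to be chosen, the complete metric space
\[
E_T:=\Big\{u\in C([0,T];L^\infty(\mathbb{H}^N)):\ \|u\|_{L^\infty((0,T)\times\mathbb{H}^N)}\le M\Big\}
\]
with the sup metric, and the map
\[
(\Phi u)(t):=e^{-t\Delta_{\mathbb{H}}}u_0+\int_0^t e^{-(t-s)\Delta_{\mathbb{H}}}\big(|u(s)|^p+f\big)\,ds.
\]
Using $\|e^{-t\Delta_{\mathbb{H}}}g\|_{L^\infty}\le\|g\|_{L^\infty}$, one gets $\|\Phi u\|_{L^\infty((0,T)\times\mathbb{H}^N)}\le \tfrac{M}{2}+T(M^p+\|f\|_{L^\infty})$, so $\Phi$ maps $E_T$ into itself once $T(M^p+\|f\|_{L^\infty})\le M/2$; continuity in $t$ follows from strong continuity of the semigroup. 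For the contraction estimate I would use the elementary inequality $\big||a|^p-|b|^p\big|\le p\,\max(|a|,|b|)^{p-1}|a-b|\le p M^{p-1}|a-b|$ on $E_T$, giving $\|\Phi u-\Phi v\|\le pM^{p-1}T\,\|u-v\|$; shrinking $T$ further so that $pM^{p-1}T\le 1/2$ yields a unique fixed point, which is the mild solution — this proves (i).

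For part (ii), I would define $T_{\max}$ as the supremum of all $T$ for which a mild solution on $[0,T]$ exists; uniqueness from (i) guarantees these solutions agree on overlaps, so they glue to a solution on $[0,T_{\max})$. For the blow-up alternative, suppose $T_{\max}<\infty$ but $\liminf_{t\to T_{\max}^-}\|u(t)\|_{L^\infty}=:K<\infty$; then along a sequence $t_n\uparrow T_{\max}$ with $\|u(t_n)\|_{L^\infty}\le K+1$, the local existence time furnished by (i) starting from data $u(t_n)$ depends only on $K+1$ and $\|f\|_{L^\infty}$, hence is bounded below by a fixed $\tau>0$; taking $t_n$ with $T_{\max}-t_n<\tau$ extends the solution past $T_{\max}$, a contradiction. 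Hence $\lim_{t\to T_{\max}^-}\|u(t)\|_{L^\infty}=\infty$.

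For part (iii), the point is that when $u_0,f\in L^r$ the same contraction can be run simultaneously in $L^\infty\cap L^r$: on the ball of radius $M$ in $L^\infty$ intersected with a ball in the space $C([0,T];L^r)$, the map $\Phi$ is well-defined because $\||u|^p\|_{L^r}\le \|u\|_{L^\infty}^{p-1}\|u\|_{L^r}\le M^{p-1}\|u\|_{L^r}$ and $e^{-t\Delta_{\mathbb{H}}}$ contracts $L^r$; the same $T$ works (or a smaller one), and by uniqueness the $L^r$-valued solution coincides with the $L^\infty$-valued one from (i). A continuation/bootstrap argument, using that the $L^\infty$-norm stays finite on $[0,T_{\max})$, then propagates the $L^r$-regularity up to $T_{\max}$: on any $[0,T']\subset[0,T_{\max})$ the quantity $\sup_{[0,T']}\|u(t)\|_{L^r}$ satisfies, via \eqref{MS2} and Minkowski's integral inequality, a linear Gronwall-type bound
\[
\|u(t)\|_{L^r}\le \|u_0\|_{L^r}+t\|f\|_{L^r}+\int_0^t \|u(s)\|_{L^\infty}^{p-1}\|u(s)\|_{L^r}\,ds,
\]
which keeps $\|u(t)\|_{L^r}$ finite and shows $t\mapsto u(t)\in L^r$ is continuous.

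I expect the only genuine subtlety to be the two semigroup facts used throughout: that $e^{-t\Delta_{\mathbb{H}}}$ is strongly continuous and $L^p$-contractive for all $p\in[1,\infty]$ on $\mathbb{H}^N$. These follow from the existence of a smooth positive heat kernel $p_t$ on the Heisenberg group with $\int_{\mathbb{H}^N}p_t\,d\eta=1$ (convolution against a probability density is an $L^p$-contraction by Young's inequality, and left-invariance makes the convolution structure available); I would cite the standard references for the sub-Laplacian heat kernel rather than reprove this. Everything else is the routine Banach fixed-point bookkeeping sketched above, identical in form to the Euclidean case.
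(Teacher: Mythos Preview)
Your approach is essentially identical to the paper's: Banach fixed point on a ball in $C([0,T];L^\infty)$, contraction via the pointwise Lipschitz estimate for $|\cdot|^p$, a continuation argument for (ii), and a simultaneous $L^\infty\cap L^r$ fixed point for (iii) using $\||u|^p\|_{L^r}\le\|u\|_{L^\infty}^{p-1}\|u\|_{L^r}$. The one slip is your choice $M=2\|u_0\|_{L^\infty}$: if $u_0=0$ (or $\|u_0\|_{L^\infty}$ is small relative to $\|f\|_{L^\infty}$) then the self-mapping condition $T(M^p+\|f\|_{L^\infty})\le M/2$ cannot be met for any $T>0$, so the ball you chose is not invariant under $\Phi$. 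The paper avoids this by taking the ball radius to be $2\max(\|u_0\|_{L^\infty},\|f\|_{L^\infty})$, which builds in room for the forcing term; with that single adjustment your argument goes through verbatim.
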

\begin{proof} [Proof of Theorem \ref{T4.2}.] The assertion (i).
First, for an arbitrary $T >0$, we define the Banach space $\Theta$ in the following form
\begin{equation*}\begin{split}
\Theta=\{u\in C([0, T];\,L^\infty({\mathbb{H}^N})) : \| u\|_{L^{\infty}({(0,T);\, L^{\infty}(\mathbb{H}^N}))}\leq2 \delta(u_0,f)\},\end{split}\end{equation*}where $\delta(u_0,f)=\max\{\| u_0\|_{L^{\infty}({\mathbb{H}^N})}; \|f \|_{L^{\infty}({\mathbb{H}^N})}\}.$

We endow $\Theta$ with the distance generated by the norm of  $C([0, T];\,L^\infty({\mathbb{H}^N}))$, such that
\begin{equation}\label{DIS}
d(u,v)=\|u-v\|_{L^{\infty}({(0,T);\, L^{\infty}(\mathbb{H}^N}))},\,\,\,u,v\in\Theta.     
\end{equation}
Next, for $u\in \Theta$ we define the map
$$(\Psi u)(t)=e^{-t\Delta_{\mathbb{H}}}u_0+\int_0^te^{-(t-s)\Delta_{\mathbb{H}}}(|u(s)|^p+f)ds,\,\,\,0\leq t\leq T.$$
We are going to prove the existence of a unique local mild solution as a fixed point of $\Psi$ using the Banach fixed point theorem.\\
$\bullet$ $\Psi:\Theta\rightarrow \Theta.$
Let $u\in \Theta$, then in view of \cite[Proposition 5]{Georgiev}, it implies that
\begin{align*}\| (\Psi u)(t)\|_{\Theta}&\leq\|e^{-t\Delta_{\mathbb{H}}}u_0\|_{L^{\infty}({\mathbb{H}^N})}+\biggl\|\int_0^te^{-(t-s)\Delta_{\mathbb{H}}}(|u(s)|^p+f)ds\biggr\|_{L^{\infty}({\mathbb{H}^N})}\\&\leq \| u_0 \|_{L^{\infty}({\mathbb{H}^N})}+T\left\|u\right\|^p_{L^\infty((0,T);\,L^{\infty}({\mathbb{H}^N}))}+ T\| f \|_{L^{\infty}({\mathbb{H}^N})}\\&\leq (1+T)\delta(u_0,f)
+2^pT\delta^p(u_0,f), \end{align*}
for all $0< t\leq T$. It follows that 
\begin{align}\label{LL1}\| (\Psi u)(t)\|_{\Theta}&\leq (1+T
+2^pT\delta^{p-1}(u_0,f))\delta(u_0,f). \end{align}
Let $T>0$ be sufficiently small so that
\begin{equation*}\label{TM}
T+2^pT\delta^{p-1}(u_0,f)\leq1.    
\end{equation*}
Accordingly, by \eqref{LL1}, one has 
$$\| (\Psi u)(t)\|_{\Theta}\leq2 \delta(u_0,f),$$ which gives $\Psi( \Theta)\subset\Theta$.

$\bullet$ \textbf{$\Psi$ is a contraction map.} For $u,v \in \Theta,$ we have 
\begin{align*}\| (\Psi u)(t)-(\Psi v)(t)\|_{\Theta}&=\int_0^te^{-(t-s)\Delta_{\mathbb{H}}}(|u(s)|^p-|v(s)|^p)ds\\&\leq 2^{p-1}C(p)T\delta^{p-1}(u_0,f)\left\|u-v\right\|_{L^\infty((0,T);\,L^{\infty}({\mathbb{H}^N}))}
\\&\leq \left\|u-v\right\|_{L^\infty((0,T);\,L^{\infty}({\mathbb{H}^N}))},
\end{align*}
thanks to the following inequality
\begin{equation}\label{QWE}
 ||u|^{p}-|v|^{p}|\leq C(p)|u-v|\left(|u|^{p-1}+|v|^{p-1}\right),
\end{equation}
if $T$ is chosen such that
\begin{equation*}
  2^{p-1}C(p)T\delta^{p-1}(u_0,f)<1.
\end{equation*}
The Banach fixed point theorem now ensures the existence of a mild solution to \eqref{02}.

$\bullet$ \textbf{Uniqueness of the solution.} If $u, v$ are two mild solutions in $\Theta$ for some $T > 0$, then using \cite[Proposition 5]{Georgiev} and \eqref{QWE}, we obtain
\begin{align*}\| u(s)-v(s) \|_{L^{\infty}({\mathbb{H}^N})}&=\int_0^te^{-(t-s)\Delta_{\mathbb{H}}}\||u(s)|^p-|v(s)|^p\|_{L^{\infty}({\mathbb{H}^N})}ds\\&\leq C(p)\int_0^t\left\|u(s)-v(s)\right\|_{L^{\infty}({\mathbb{H}^N})}ds.
\end{align*}
So the uniqueness follows from Gronwall's inequality.

The assertion (ii). Next, due to the uniqueness of the mild solution, we conclude that the solution of problem \eqref{02} can be extended on a maximal interval $ [0, T^{*})$, where $T^*$ is given by
$$
T^{*} = \sup\left\{ t>0 \; : \eqref{MS2}\,\, \text{admits a solution \,\,} u \in C([0,t];\,L^\infty(\mathbb{H}^N))\right\}. 
$$
Assume that $T^*<\infty$ and that there exists $K>0$ which satisfies
\begin{equation}\label{MT}
\|u(t)\|_{L^\infty(\mathbb{H}^N)}\leq K,\,\,\text{for}\,\,0\leq t<T^*.     
\end{equation}
Let $t_*$ be such that $t_*\in({T^*}/{2};T^*)$, and we define for $\tau\in(0;T^*)$ the set
$$
\Phi = \left\{v \in C([0,\tau];\, L^\infty(\mathbb{H}^N):\|v\|_{ \,L^{\infty}((0,\tau);\,L^\infty(\mathbb{H}^N))}\leq 2\delta(K,f)\right\}, 
$$where $\delta(K,f)=\max\{K,\|f\|_{L^\infty(\mathbb{H}^N)}\}$. For a given $v\in\Phi$, let us define
$$(\Lambda v)(t)= e^{-t\Delta_{\mathbb{H}}}u(t_*)+\int_0^te^{-(t-s)\Delta_{\mathbb{H}}}(|v(s)|^p+f)ds,\,\,0\leq t\leq\tau.$$
Similarly as above we endow $\Phi$ with the distance \eqref{DIS}, and we have \\$\Lambda v\in C([0,\tau];\,L^\infty(\mathbb{H}^N)).$
Moreover, from \cite[Proposition 5]{Georgiev} and \eqref{MT}, it follows that
\begin{equation}\begin{split}\label{TTT}\| (\Lambda v)(t)\|_{\Lambda}&\leq\|e^{-t\Delta_{\mathbb{H}}}u(t_*)\|_{L^{\infty}({\mathbb{H}^N})}+\biggl\|\int_0^te^{-(t-s)\Delta_{\mathbb{H}}}(|v(s)|^p+f)ds\biggr\|_{L^{\infty}({\mathbb{H}^N})}\\&\leq \| u(t_*) \|_{L^{\infty}({\mathbb{H}^N})}+\tau\left\|v\right\|^p_{L^\infty((0,T);\,L^{\infty}({\mathbb{H}^N}))}+ \tau\| f \|_{L^{\infty}({\mathbb{H}^N})}\\&\leq (1+\tau)\delta(K,f)+2^p\tau\delta^p(K,f),\end{split}\end{equation}for all $0\leq t\leq\tau.$
Let $\tau>0$ be a sufficiently small constant which satisfies
\begin{equation}\label{UND}
\tau+2^p\tau\delta^{p-1}(K,f)\leq1.    
\end{equation}
Consequently, due to \eqref{TTT} we obtain
$$\| (\Lambda v)(t)\|_{\Lambda}\leq2\delta(K,f),$$
which gives us $\Lambda(\Phi)\subset\Phi$. In addition, similiar to the argument before, under the condition \eqref{UND}, the mapping $\Lambda:\Phi\to\Phi$ is a contraction. According to the Banach contraction principle, we can see that there exists a unique function $v\in\Phi$ which satisfies
$$v(t)=e^{-t\Delta_{\mathbb{H}}}u(t_*)+\int_0^te^{-(t-s)\Delta_{\mathbb{H}}}(|v(s)|^p+f)ds,\,\,0\leq t\leq\tau.$$
At this stage, for $\max\{T_{\max}/2;T_{\max}-\tau\}<\overline{t}<T_{\max},$ let
\begin{equation*} \overline{u}(t)=\left\{\begin{array}{l}
u(t)\,\,\,\text{if}\,\,\,0\leq t\leq\overline{t}, \\{}\\
v(t-\overline{t})\,\,\,\text{if}\,\,\,\overline{t}\leq t\leq\overline{t}+\tau. \end{array}\right.\end{equation*}
Then we observe that $\overline{u}\in C([0,\overline{t}+\tau];\,L^\infty(\mathbb{H}^N))$ is a solution to \eqref{MS2} and $\overline{t}+\tau>T_{\max}$, which contradicts the definition of $T_{\max}$. Therefore, we obtain that if $T_{\max}<\infty$ then $\lim_{t\to T_{\max}^-}\|u(t)\|_{L^\infty{(\mathbb{H}^N})}=\infty$. It completes the proof.

The assertion (iii). If $u_0, f\in L^r(\mathbb{H}^N)$ with $1\leq r<\infty$, then we repeat the
fixed point argument in the space
\begin{equation*}\begin{split}
\Theta_r=&\{u\in C([0, T];\,L^\infty({\mathbb{H}^N}))\cap C([0, T_{\max});\,L^r({\mathbb{H}^N})):\\& \| u\|_{L^{\infty}({(0,T);\, L^{\infty}(\mathbb{H}^N}))}\leq2 \delta(u_0,f),\,\|u\|_{L^{\infty}((0,T);\,L^{r}({\mathbb{H}^N}))}\leq2 \delta_r(u_0,f)\},    
\end{split}\end{equation*}instead of $\Theta$, where $\delta_r(u_0,f)=\max\{\| u_0\|_{L^{r}({\mathbb{H}^N})}; \|f \|_{L^{r}({\mathbb{H}^N})}\}.$
\\In addition, we endow $\Theta_r$ with the distance 
\begin{equation*}
d(u,v)=\|u-v\|_{L^\infty((0,T);\,L^\infty(\mathbb{H}^N))}+\|u-v\|_{L^\infty((0,T);\,L^r(\mathbb{H}^N))},\,\,\,u,v\in\Theta_r.
\end{equation*}
Since
$$\||u(t)|^p\|_{L^r(\mathbb{H}^N)}\leq\|u(t)\|^{p-1}_{L^\infty(\mathbb{H}^N)}\|u(t)\|_{L^r(\mathbb{H}^N)},$$
applying the same argument as in the proof of the assertion (i), we obtain a unique solution $u$ in $\Theta_r$. Therefore, we conclude that 
$$u\in C([0, T_{\max});\,L^\infty({\mathbb{H}^N}))\cap C([0, T_{\max});\,L^r({\mathbb{H}^N})),$$which completes the proof.
\end{proof}
\begin{definition}[Weak solution]\label{DWS1} Let $u_0\in L^1_\text{loc}(\mathbb{H}^{N})$. A locally integrable function $u\in L^p_\text{loc}((0,T);\,L^p_\text{loc}(\mathbb{H}^{N}))$ is called a local weak solution of \eqref{02}, if the equality
\begin{equation}\label{WS2}\begin{split}
\int_0^T\int_{\mathbb{H}^{N}}|u|^{p}\psi  d\eta dt &+\int_{\mathbb{H}^{N}}u_0\psi(\eta,0)d\eta+\int_0^T\int_{\mathbb{H}^{N}}f\psi  d\eta dt\\&= -\int_0^T\int_{\mathbb{H}^{N}}u\psi_td\eta dt-\int_0^T\int_{\mathbb{H}^{N}}u\Delta_\mathbb{H}\psi d\eta dt,  
\end{split}\end{equation}
holds true for any function $\psi\in C^{1}_{0}((0,T);\,C^{2}_{0}(\mathbb{H}^{N})), \psi\geq0$ and $\psi(T,\cdot)=0.$ Here $C^2_0({\mathbb{H}^N})$ is a space of two-times continuously differentiable functions with compact support on $\mathbb{H}^N.$

If $T=+\infty,$ then $u$ is called a global in time weak solution of \eqref{02}.
\end{definition}

\begin{lemma}[Mild $\rightarrow$ Weak solution]\label{MWSL}
Suppose that $u_0\in L^\infty(\mathbb{H}^N)$ and let $u\in C([0,T];\,L^\infty(\mathbb{H}^N))$ be a mild solution of \eqref{02}. Then $u$ is also a weak solution to \eqref{02}.\end{lemma}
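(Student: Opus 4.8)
The plan is to show that a mild solution, which by definition satisfies the Duhamel formula \eqref{MS2}, also satisfies the weak formulation \eqref{WS2}. The natural route is to start from the semigroup identity and transfer derivatives onto the test function via integration by parts, using the self-adjointness of $(-\Delta_{\mathbb{H}})$ recorded in \eqref{IBP} together with the fact that $\partial_t e^{-t\Delta_{\mathbb{H}}}\varphi = -\Delta_{\mathbb{H}}e^{-t\Delta_{\mathbb{H}}}\varphi$ (the heat semigroup solves the heat equation). Concretely, I would fix $\psi \in C^1_0((0,T);\,C^2_0(\mathbb{H}^N))$ with $\psi \geq 0$ and $\psi(T,\cdot)=0$, multiply \eqref{MS2} by $-\partial_t\psi(\eta,t) - \Delta_{\mathbb{H}}\psi(\eta,t)$, and integrate over $(0,T)\times\mathbb{H}^N$; then the right-hand side of \eqref{WS2} is exactly $\int_0^T\int_{\mathbb{H}^N} u(-\psi_t - \Delta_{\mathbb{H}}\psi)\,d\eta\,dt$ by construction, so the task reduces to evaluating the contribution of each term of \eqref{MS2}.

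The key computational step is a Fubini-plus-integration-by-parts argument applied to the convolution terms. For the forcing/nonlinear term, writing $g(s) = |u(s)|^p + f$, one has $\int_0^T \int_{\mathbb{H}^N}\big(\int_0^t e^{-(t-s)\Delta_{\mathbb{H}}}g(s)\,ds\big)(-\psi_t - \Delta_{\mathbb{H}}\psi)\,d\eta\,dt$; swapping the order of the $s$- and $t$-integrations (justified by integrability, since $u \in C([0,T];L^\infty)$, $f\in L^\infty$, and $\psi$ has compact support in space and time) and then integrating by parts in $t$ on the inner integral over $s \le t \le T$, the boundary term at $t=T$ vanishes because $\psi(T,\cdot)=0$, while the boundary term at $t = s$ produces $\int_0^T \int_{\mathbb{H}^N} g(s)\psi(\eta,s)\,d\eta\,ds = \int_0^T\int_{\mathbb{H}^N}(|u|^p + f)\psi\,d\eta\,dt$; the remaining terms combine via $\partial_t e^{-(t-s)\Delta_{\mathbb{H}}}g(s) = -\Delta_{\mathbb{H}}e^{-(t-s)\Delta_{\mathbb{H}}}g(s)$ and \eqref{IBP} to cancel identically. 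An entirely analogous computation on the term $e^{-t\Delta_{\mathbb{H}}}u_0$ yields the boundary contribution $\int_{\mathbb{H}^N} u_0\,\psi(\eta,0)\,d\eta$ and an interior cancellation. Summing the three contributions gives precisely \eqref{WS2}.

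The main obstacle is the rigorous justification of moving $\Delta_{\mathbb{H}}$ across the semigroup and of the boundary terms, since $u_0$ and $f$ are only assumed in $L^\infty(\mathbb{H}^N)$, so $e^{-t\Delta_{\mathbb{H}}}u_0$ need not be twice differentiable up to $t=0$ in a classical sense. I would handle this by exploiting the smoothing of the heat semigroup on $(0,T)$: for $s < t$, $e^{-(t-s)\Delta_{\mathbb{H}}}g(s)$ is smooth, so all manipulations are legitimate on the open time interval, and one passes to the limit $t \downarrow s$ (resp. $t \downarrow 0$) using strong continuity of the semigroup on bounded sets together with dominated convergence, the domination coming from the uniform bound $\|u(t)\|_{L^\infty} \le C$ on $[0,T]$ and the compact support of $\psi$. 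Alternatively, and perhaps more cleanly, one can first prove the identity for smooth truncated/mollified data $u_0^{(\varepsilon)}, f^{(\varepsilon)}$ and pass to the limit, since both sides of \eqref{WS2} depend continuously on the data in the relevant topologies. Either way the estimates are routine; the only care needed is bookkeeping of the region $\{0 \le s \le t \le T\}$ when applying Fubini and the $t$-integration by parts.
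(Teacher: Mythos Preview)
Your proposal is correct and follows essentially the same route as the paper: both start from the Duhamel representation \eqref{MS2}, use the semigroup identity $\partial_t e^{-(t-s)\Delta_{\mathbb{H}}}g = (-\Delta_{\mathbb{H}})e^{-(t-s)\Delta_{\mathbb{H}}}g$ together with the self-adjointness \eqref{IBP} to transfer derivatives onto $\psi$, and recover the initial-data term from the boundary contribution at $t=0$ (resp.\ $t=s$) using $\psi(T,\cdot)=0$. The only difference is organizational---the paper differentiates the pairing $\int_{\mathbb{H}^N} u\psi\,d\eta$ in $t$ and then integrates over $[0,T]$, whereas you test directly against $-\psi_t-\Delta_{\mathbb{H}}\psi$ and apply Fubini plus integration by parts in $t$; your formulation is somewhat cleaner on the regularity side since it never formally differentiates $u$.
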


\begin{proof}[Proof of Lemma \ref{MWSL}.]
Let $T>0$ and $f\in L^1(\mathbb{H}^N)$. Then, multiplying the identity \eqref{MS2} by $\psi\in C^{1}_{0}((0,T);\,C^{2}_{0}(\mathbb{H}^{N})),$ $\psi(T,\cdot)=0$, and integrating over $\mathbb{H}^N$, we obtain
\begin{equation*}\label{MS3}\begin{split}
\int_{\mathbb{H}^N}u\psi d\eta=\int_{\mathbb{H}^N}e^{-t\Delta_{\mathbb{H}}}u_0\psi d\eta+\int_{\mathbb{H}^N}\biggl(\int_0^te^{-(t-s)\Delta_{\mathbb{H}}}(|u(s)|^p+f)ds\biggr)\psi d\eta. 
\end{split}\end{equation*}
By differentiating the last equality with respect to the variable $t$, it follows that
\begin{equation}\label{MS4}\begin{split}
\frac{d}{dt}\int_{\mathbb{H}^N}u\psi d\eta&=\int_{\mathbb{H}^N}\frac{d}{dt}\biggl(e^{-t\Delta_{\mathbb{H}}}u_0\psi \biggr)d\eta\\&+\int_{\mathbb{H}^N}\frac{d}{dt}\biggl[\biggl(\int_0^te^{-(t-s)\Delta_{\mathbb{H}}}(|u(s)|^p+f)ds\biggr)\psi \biggr]d\eta. 
\end{split}\end{equation}
Next, using \eqref{IBP}, we deduce that
\begin{equation*}\label{MS5}\begin{split}
\int_{\mathbb{H}^N}\frac{d}{dt}\biggl(e^{-t\Delta_{\mathbb{H}}}u_0\psi \biggr)d\eta&=\int_{\mathbb{H}^N}(-\Delta_{\mathbb{H}})e^{-t\Delta_{\mathbb{H}}}u_0\psi d\eta+\int_{\mathbb{H}^N}e^{-t\Delta_{\mathbb{H}}}u_0\psi_t d\eta
\\&=\int_{\mathbb{H}^N}e^{-t\Delta_{\mathbb{H}}}u_0(-\Delta_{\mathbb{H}}\psi) d\eta+\int_{\mathbb{H}^N}e^{-t\Delta_{\mathbb{H}}}u_0\psi_t d\eta, 
\end{split}\end{equation*}
and 
\begin{equation*}\label{MS6}\begin{split}
\int_{\mathbb{H}^N}\frac{d}{dt}\biggl[\biggl(\int_0^t&e^{-(t-s)\Delta_{\mathbb{H}}}(|u(s)|^p+f)ds\biggr)\psi \biggr]d\eta\\&=\int_{\mathbb{H}^N}|u(t)|^p\psi d\eta+\int_{\mathbb{H}^N}f\psi d\eta\\&+\int_{\mathbb{H}^N}\biggl(\int_0^te^{-(t-s)\Delta_{\mathbb{H}}}(|u(s)|^p+f)ds\biggr)(-\Delta_{\mathbb{H}}\psi) d\eta\\&+\int_{\mathbb{H}^N}\biggl(\int_0^te^{-(t-s)\Delta_{\mathbb{H}}}(|u(s)|^p+f)ds\biggr)\psi_t d\eta. 
\end{split}\end{equation*}
Therefore, in view of \eqref{MS2} and the last identities, the equality \eqref{MS4} can be rewritten as
\begin{equation*}\label{MS7}\begin{split}
\frac{d}{dt}\int_{\mathbb{H}^N}u\psi d\eta&=\int_{\mathbb{H}^N}u(-\Delta_{\mathbb{H}}\psi) d\eta+\int_{\mathbb{H}^N}u\psi_t d\eta+\int_{\mathbb{H}^N}|u(t)|^p\psi d\eta+\int_{\mathbb{H}^N}f\psi d\eta. \end{split}\end{equation*}

Finally, integrating in time over $[0,T]$ and noting the fact that $\psi(T,\cdot)=0$ we obtain the desired result, which completes the proof. \end{proof}

\section{Critical exponent}
In this section we investigate global properties of solutions.

\begin{theorem}\label{T1}
Let the initial data $u_0\in L^1_{\text{loc}}(\mathbb{H}^{N})$ be nonnegative and let
$$p_S=\frac{Q}{Q-2}.$$
Then we have the following properties:
\\$\bullet$ if $p\leq p_S$ and $\int_{\mathbb{H}^N}fd\eta>0$, then \eqref{02} does not admit global in time solutions.
\\$\bullet$ if $p> p_S$ and $u_0\geq0,$ $f>0$ for all $\eta\in \mathbb{H}^N$, then problem \eqref{02} admits global stationary solutions.
\end{theorem}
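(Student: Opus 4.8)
The plan is to prove the two assertions by unrelated methods: a nonlinear–capacity (rescaled test function) argument for nonexistence, and an explicit construction through the Green potential of $-\Delta_{\mathbb H}$ for the stationary solutions.

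\emph{Nonexistence, $p\le p_S$.} Suppose, for contradiction, that \eqref{02} has a global weak solution $u$ in the sense of Definition \ref{DWS1} (by Lemma \ref{MWSL} this also covers global mild solutions). In the weak identity \eqref{WS2} I would insert the separated, rescaled test function
\[
\psi(\eta,t)=\Phi\!\left(\tfrac{|\eta|_{\mathbb H}}{R}\right)^{\!\ell}\Theta\!\left(\tfrac{t}{T}\right)^{\!m},
\]
with $\Phi,\Theta$ standard cutoffs ($\Phi\equiv1$ on $[0,1]$, $\operatorname{supp}\Phi\subset[0,2]$; $\Theta\equiv1$ near $0$, $\Theta\equiv0$ on $[1,\infty)$) and $\ell,m$ large relative to $p'=p/(p-1)$, so that $\psi\ge0$ is admissible and the weights $|\Delta_{\mathbb H}\psi|^{p'}\psi^{1-p'}$, $|\psi_t|^{p'}\psi^{1-p'}$ stay bounded. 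Two features of $\mathbb H^N$ are used: $\Delta_{\mathbb H}$ is homogeneous of degree $2$ under $\delta_\lambda$ and, via the radial formula \eqref{AZQ}, $|\Delta_{\mathbb H}\psi|\lesssim R^{-2}$ on the Korányi annulus $\{R\le|\eta|_{\mathbb H}\le 2R\}$; and the Haar measure of $\{|\eta|_{\mathbb H}\le R\}$ is $\asymp R^{Q}$. Bounding the two right-hand terms of \eqref{WS2} by Young's inequality so as to absorb $\tfrac12\iint|u|^p\psi$ into the left side, then discarding the nonnegative terms $\tfrac12\iint|u|^p\psi$ and $\int u_0\,\psi(\cdot,0)\,d\eta$ (here $u_0\ge0$), leaves
\[
\int_0^T\!\!\int_{\mathbb H^N} f\,\psi\,d\eta\,dt\ \lesssim\ R^{Q}T^{1-p'}+R^{Q-2p'}T .
\]
Since $f$ is independent of $t$, the left side is $c_m\,T\int_{\mathbb H^N} f\,\Phi(|\cdot|_{\mathbb H}/R)^{\ell}\,d\eta$; dividing by $T$ and letting $T\to\infty$ removes the first term and gives $\int f\,\Phi_R\lesssim R^{Q-2p'}$. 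For $p<p_S$ we have $Q-2p'<0$, so letting $R\to\infty$ (dominated convergence, $f\in L^1$) forces $\int_{\mathbb H^N} f\,d\eta\le0$, contradicting the hypothesis.

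\emph{The critical case $p=p_S$ --- the main obstacle.} Here $Q-2p'=0$ and the previous estimate degenerates to the harmless bound $\int f\,\Phi_R\le C$. I would instead keep the term $\iint|u|^{p_S}\psi$ and estimate the two error terms by Hölder's inequality restricted to the (Korányi-annular in space, shell-in-time) regions where $\Delta_{\mathbb H}\psi$ and $\psi_t$ are supported; the crude estimate supplies the a priori bound controlling the factors $\big(\iint_{\{R\le|\eta|_{\mathbb H}\le2R\}}|u|^{p_S}\psi\big)^{1/p_S}$, and the task is to show these carry an extra power that tends to $0$ as $R,T\to\infty$ along an appropriate relation, again yielding $\int_{\mathbb H^N}f\,d\eta\le0$. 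Pinning down this gain is the delicate step: it uses the sharp dilation estimates for $\Delta_{\mathbb H}$ from \eqref{AZQ}--\eqref{SS1} and the exact value $Q=2N+2$, and it is precisely what lets the critical exponent be treated with no extra hypothesis on $f$.

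\emph{Existence of stationary solutions, $p>p_S$.} It suffices to produce a positive solution of $-\Delta_{\mathbb H}v=v^{p}+f$, since then $u(\eta,t)\equiv v(\eta)$ solves \eqref{02} (with $u_0=v$). From \eqref{AZQ} one checks that $|\eta|_{\mathbb H}^{-(Q-2)}$ is $\Delta_{\mathbb H}$-harmonic away from the origin, so (after normalization) $\Gamma(\eta)=c_Q|\eta|_{\mathbb H}^{-(Q-2)}$ is the fundamental solution of $-\Delta_{\mathbb H}$. I would run the monotone iteration $v_0\equiv0$, $v_{k+1}=\Gamma*(v_k^{p}+f)$: since $\Gamma,f>0$ the sequence is nondecreasing with $v_1=\Gamma*f>0$, and a uniform upper barrier is given by $\overline v=\varepsilon(1+|\cdot|_{\mathbb H})^{-(Q-2)}$ once one has the potential estimate
\[
\Gamma*\big((1+|\cdot|_{\mathbb H})^{-(Q-2)p}\big)(\eta)\ \lesssim\ (1+|\eta|_{\mathbb H})^{-(Q-2)},
\]
which holds exactly because $(Q-2)p>Q\iff p>p_S$ (so the convolution converges at infinity with the right rate). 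Then $v_k\le\overline v$ by induction, $v_k\uparrow v$, monotone convergence gives $v=\Gamma*(v^p+f)$, i.e. $-\Delta_{\mathbb H}v=v^p+f$ with $0<v_1\le v\le\overline v$. Equivalently, this is a contraction/sub--supersolution argument in the weighted space $\{v:\sup_\eta(1+|\eta|_{\mathbb H})^{Q-2}|v(\eta)|<\infty\}$ for $f$ small and suitably decaying; it is essentially routine once the sharp potential bound above --- the only place $p>p_S$ enters --- is established.
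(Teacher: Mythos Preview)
Your subcritical argument is correct and essentially equivalent to the paper's, with one structural difference: you decouple the space scale $R$ and the time scale $T$ and send $T\to\infty$ first, whereas the paper links them by taking $\psi=\mu(t/T)\Phi(|\eta|_{\mathbb H}^4/T^2)$, so that a single parameter $T$ governs both and the exponent $-\tfrac{p}{p-1}+\tfrac{Q}{2}$ falls out directly. Either route works.

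The critical case is where your proposal and the paper genuinely diverge. You outline the ``annular Hölder'' refinement: first obtain a uniform bound on $\iint|u|^{p_S}\psi$, then go back and use H\"older on the supports of $\Delta_{\mathbb H}\psi$ and $\psi_t$ so that the right-hand side carries a factor $\big(\iint_{\text{annulus}}|u|^{p_S}\psi\big)^{1/p_S}$ that one hopes vanishes in the limit. This strategy can be made to work, but you have not actually done it, and with your independent $R,T$ the coupling needed to kill both error terms simultaneously is exactly the delicate point you flag. The paper avoids this entirely by changing the test function: it keeps $\mu(t/T)$ in time but replaces the spatial cutoff by the logarithmic profile
\[
\varphi(\eta)=\Psi\!\left(\frac{\ln(|\eta|_{\mathbb H}/\sqrt{R})}{\ln\sqrt{R}}\right),
\]
supported on $\{|\eta|_{\mathbb H}\le R\}$ and equal to $1$ on $\{|\eta|_{\mathbb H}\le\sqrt{R}\}$. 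The extra $\ln R$ in the denominator of $\nabla_{\mathbb H}\varphi$ produces, after the Young-inequality step, the bound
\[
\int_{\mathbb H^N} f\varphi\,d\eta\ \le\ C\bigl(T^{-Q/2}R^{Q}+(\ln R)^{-Q/2}\bigr),
\]
and the choice $T=R^{j}$ with $j>2$ sends both terms to zero. This is a one-shot argument with no second pass and no need to control $\iint|u|^{p_S}$ on annuli; it is the cleaner device here and is what you are missing.

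For $p>p_S$ you do considerably more than the paper: it simply quotes Birindelli--Capuzzo~Dolcetta--Cutri for the existence of a positive $v$ with $-\Delta_{\mathbb H}v>v^{p}$ when $p>Q/(Q-2)$, and declares this $v$ a stationary (super)solution of \eqref{02} for the given $f>0$. Your Newton-potential iteration with barrier $(1+|\eta|_{\mathbb H})^{-(Q-2)}$ is correct and actually builds a solution of the \emph{equation} $-\Delta_{\mathbb H}v=v^{p}+f$, but note that it needs smallness and decay of $f$ for the barrier to dominate, assumptions not present in the theorem's hypothesis. In that sense your construction proves a sharper but narrower statement than the paper's brief citation.
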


\begin{proof}[Proof of Theorem \ref{T1}.]
$\bullet$ {\bf Subcritical case $\large\displaystyle p<p_S=\frac{Q}{Q-2}$}. The proof is done by contradiction. Assume that there exists a global in time weak solution $u$ of the problem \eqref{02}. We need to introduce two cut-off functions. 
\\Let the functions $\mu\in C^1(0,T)$ and $\Phi\in C^2(\mathbb{R}^+)$ be such that
\begin{equation}\label{WT1}
 \mu\geq0;\,\,\,\mu\not\equiv0;\,\,\,\text{supp}(\mu)\subset(0,1)   
\end{equation}
and
\begin{equation}\label{WT2}
0\leq \Phi\leq1,\,\,\Phi\equiv 1\,\,\text{in}\,\,[0,1],\,\,\Phi\equiv0\,\,\text{in}\,\,[2,\infty).   
\end{equation}
For sufficiently large positive constant $T$, we take
\begin{equation}\label{test1}
\psi=\xi(t)\varphi(\eta)=\mu\left(\frac{t}{T}\right)\Phi\left(\frac{|\eta|^4_{\mathbb{H}}}{T^2}\right),\,\,\,(t,\eta)\in[0,T]\times \mathbb{H}^N,\end{equation}where  $|\eta|_{\mathbb{H}}$ is defined by \eqref{HDO}.
\\In addition, let $\psi$ satisfies
\begin{equation}\label{C10}\begin{split}
\int_\Sigma\psi^{-\frac{1}{p-1}}\left|\psi_t\right|^{\frac{p}{p-1}}d\eta dt<\infty, \\\int_\Omega\psi^{-\frac{1}{p-1}}\left|\Delta_\mathbb{H}\psi\right|^{\frac{p}{p-1}}d\eta dt<\infty,
\end{split}\end{equation} where $\Sigma$ contains supp$(\psi_t)$ such that 
$$\Sigma:=\left\{(t,\eta)\in \mathbb{R}_+\times\mathbb{H}^N:0\leq t\leq T,\,0\leq|\eta|^4_{\mathbb{H}}\leq 2T^2\right\}$$
and $\Omega$ contains supp$(\Delta_{\mathbb{H}}\psi)$ which defined by
$$\Omega:=\left\{(t,\eta)\in \mathbb{R}_+\times\mathbb{H}^N:0\leq t\leq T,\,T^2\leq|\eta|^4_{\mathbb{H}} \leq 2T^2\right\},$$
respectively.
\\Hence, from Definition \ref{DWS1} we have
\begin{equation}\label{N01}\begin{split}
\int_0^T\int_{\mathbb{H}^{N}}|u|^{p}\psi  d\eta dt &+\int_{\mathbb{H}^{N}}u_0\psi(\eta,0)d\eta+\int_0^T\int_{\mathbb{H}^{N}} f\psi d\eta dt\\&\leq \int_0^T\int_{\mathbb{H}^{N}}|u||\psi_t|d\eta dt+\int_0^T\int_{\mathbb{H}^{N}}|u||\Delta_\mathbb{H}\psi| d\eta dt.  
\end{split}\end{equation}
Using the $\varepsilon$-Young inequality
$$XY\leq \frac{\varepsilon}{p} X^p+\frac{1}{p'\varepsilon^{p'-1}}Y^{p'},\,\, \frac{1}{p}+\frac{1}{p'}=1,\,\, X,Y,\varepsilon>0,$$
in the right-side of \eqref{N01}, we obtain the following estimates
\begin{equation*}\label{N02}\begin{split}
\int_0^T\int_{\mathbb{H}^{N}}&|u||\psi_t|d\eta dt\leq\frac{\varepsilon}{p}\int_0^T\int_{\mathbb{H}^{N}}|u|^{p}\psi  d\eta dt+\frac{1}{p'\varepsilon^{p'-1}}\int_\Sigma\psi^{-\frac{1}{p-1}}|\psi_t|^{\frac{p}{p-1}}d\eta dt \end{split}\end{equation*}
and
\begin{equation*}\label{N03}\begin{split}
\int_0^T\int_{\mathbb{H}^{N}}&|u|\left|\Delta_\mathbb{H}\psi\right| d\eta dt\leq\frac{\varepsilon}{p}\int_0^T\int_{\mathbb{H}^{N}}|u|^{p}\psi  d\eta dt+\frac{1}{p'\varepsilon^{p'-1}}\int_\Omega\psi^{-\frac{1}{p-1}}\left|\Delta_\mathbb{H}\psi\right|^{\frac{p}{p-1}}d\eta dt.  
\end{split}\end{equation*}
Since $u_0\geq0$ and taking $\varepsilon=\large\displaystyle\frac{p}{2}$ in the last inequalities, we get
\begin{equation}\label{N04}\begin{split}
\int_0^T\int_{\mathbb{H}^{N}} f\psi d\eta dt &\leq\frac{1}{p'(\frac{p}{2})^{p'-1}}\int_\Sigma\psi^{-\frac{1}{p-1}}\left|\psi_t\right|^{\frac{p}{p-1}}d\eta dt
\\&+\frac{1}{p'(\frac{p}{2})^{p'-1}}\int_\Omega\psi^{-\frac{1}{p-1}}\left|\Delta_\mathbb{H}\psi\right|^{\frac{p}{p-1}}d\eta dt.  
\end{split}\end{equation}
Noting that $\varphi(\eta)$ is symmetric, in view of \eqref{SS1} and \eqref{WT2}, we obtain
\begin{equation*}\begin{split}
|\Delta_{\mathbb{H}}\varphi(\eta)|&=\frac{16r^6}{T^4}\varphi''\left(\frac{r^4+\tau^2}{T^2}\right)+\frac{(2N+8)r^2}{T^2}\varphi'\left(\frac{r^4+\tau^2}{T^2}\right)\\&+\frac{16r^2\tau^2}{T^4}\varphi''\left(\frac{r^4+\tau^2}{T^2}\right)+\frac{8r^2}{T^2}\varphi'\left(\frac{r^4+\tau^2}{T^2}\right)
\\&\leq\frac{C}{T}.
\end{split}\end{equation*}Accordingly, it yields that
\begin{equation}\label{KKK1}
 |\Delta_{\mathbb{H}}\psi(\eta,t)|\leq CT^{-1}\mu\left(\frac{t}{T}\right)   
\end{equation}
and
\begin{equation}\label{KKK2}
|\psi_t(\eta,t)|\leq CT^{-1}\mu_t\left(\frac{t}{T}\right)\Phi\left(\frac{|\eta|^4_{\mathbb{H}}}{T}\right).    
\end{equation}
At this stage, we will change the variables on $\Sigma$ and $\Omega$ as
\begin{equation}\label{CV}
s=\frac{t}{T}, \Tilde{x}=\frac{x}{T^\frac{1}{2}}, \Tilde{y}=\frac{y}{T^\frac{1}{2}}, \Tilde{\tau}=\frac{\tau}{T}, \,\,\Tilde{\eta}=(\Tilde{x}, \Tilde{y}, \Tilde{\tau})  
\end{equation}
and define
\begin{equation*}\begin{split}
&\Tilde{\Omega}:=\left\{(s, \Tilde{\eta})\in \mathbb{R}^+\times\mathbb{H}^N:0\leq s\leq 1,\,\,1\leq|\Tilde{\eta}|^4_{\mathbb{H}}\leq2\right\},\,\,|\Tilde{\eta}|^4_{\mathbb{H}}=\left(|\Tilde{x}|^2+|\Tilde{y}|^2\right)^2+\Tilde{\tau}^2.
\end{split}\end{equation*}
Using the inequalities \eqref{KKK1} and \eqref{KKK2} on the right-hand side of \eqref{N04} we get 
\begin{equation*}\begin{split}
\int_\Sigma\psi^{-\frac{1}{p-1}}\left|\psi_t\right|^{\frac{p}{p-1}}d\eta dt\leq CT^{-\frac{p}{p-1}+\frac{Q}{2}+1}\int_{\Tilde{\Omega}}\Phi\left(|\tilde{\eta}|^4_{\mathbb{H}}\right)\mu^{-\frac{1}{p-1}}\left(s\right)\left|\mu_s\left(s\right)\right|^{\frac{p}{p-1}}d\Tilde{\eta} ds
\end{split}\end{equation*}
and
\begin{equation*}\label{M07}\begin{split}
\int_\Omega\psi^{-\frac{1}{p-1}}\left|\Delta_\mathbb{H}\psi\right|&^{\frac{p}{p-1}}d\eta dt\leq CT^{-\frac{p}{p-1}+\frac{Q}{2}+1}\int_{\Tilde{\Omega}}\Phi^{-\frac{1}{p-1}}\left(|\tilde{\eta}|^4_{\mathbb{H}}\right)|\mu(s)|d\Tilde{\eta} ds. 
\end{split}\end{equation*}
In addition, from \eqref{test1} and \eqref{CV} it follows that
\begin{equation*}\begin{split}
\int_0^T\int_{\mathbb{H}^N} f\psi d\eta dt&=\biggl(\int_{\mathbb{H}^N} f\varphi d\eta\biggr)\biggl(\int_0^T\mu\left(\frac{t}{T}\right) dt
\biggr)\\&=T\biggl(\int_{\mathbb{H}^N} f\varphi d\eta\biggr)\biggl(\int_0^1\mu\left(s\right)ds\biggr)\\&= CT\int_{\mathbb{H}^N} f\varphi d\eta.\end{split}\end{equation*}
Combining \eqref{N04} with the last estimates we deduce that
\begin{equation*}\begin{split}
CT\int_{\mathbb{H}^N} f\varphi d\eta \leq CT^{-\frac{p}{p-1}+\frac{Q}{2}+1}.  
\end{split}\end{equation*}
Therefore, we have
\begin{equation}\label{N08}\begin{split}
\int_{\mathbb{H}^N} f\varphi d\eta \leq CT^{\lambda},  
\end{split}\end{equation}where $$\lambda:=-\frac{p}{p-1}+\frac{Q}{2}.$$
As $\large\displaystyle p<\frac{Q}{Q-2}$ we have $\lambda<0$. 

Passing to the limit  $T\to\infty$ in \eqref{N08}, we arrive at
$$\int_{\mathbb{H}^N} fd\eta\leq0,$$
which is a contradiction.

$\bullet$ {\bf Critical case $p=p_S=\large\displaystyle\frac{Q}{Q-2}$}. 
We define the test function, for $0<T, R<\infty$ sufficiently large, by 
\begin{equation}\label{test}
\psi=\mu(t)\varphi(\eta)=\mu\left(\frac{t}{T}\right)\Psi\left(\frac{\ln\left(\frac{|\eta|_\mathbb{H}}{\sqrt{R}}\right)}{\ln\left(\sqrt{R}\right)}\right),\,\,\,t>0,\,\eta\in\mathbb{H}^N.\end{equation}
\\Let $\Psi\in C^2(\mathbb{R}^+)$ be a function $\Psi:\mathbb{R}\to[0,1]$, which is the standard cut-off function defined by
\begin{equation*}
\Psi(z)=
 \begin{cases}
   1 &\text{if $-\infty<z\leq0 $},\\
   \searrow &\text{if $0<z<1$},\\
   0 &\text{if $z\geq 1$},
 \end{cases}\end{equation*} and the function $\mu(t)$ is the same as in the previous case \eqref{WT1}.
\\Suppose that $u\in L^p_\text{loc}((0,T);\, L^p_\text{loc}(\mathbb{H}^{N}))$ is a global weak solution to \eqref{02}. 

Next, following the same technique as used in the proof of the previous case, we obtain
\begin{equation}\label{CC1}\begin{split}
\int_0^T\int_{\mathbb{H}^{N}} f\psi d\eta dt &\leq C(\varepsilon)\int_\Sigma\psi^{-\frac{1}{p-1}}\left|\psi_t\right|^{\frac{p}{p-1}}d\eta dt
\\&+C(\varepsilon)\int_\Omega\psi^{-\frac{1}{p-1}}\left|\Delta_\mathbb{H}\psi\right|^{\frac{p}{p-1}}d\eta dt.  
\end{split}\end{equation}
The elementary calculations give us
\begin{equation*}\label{CC2}\begin{split}
\int_\Sigma\psi^{-\frac{1}{p-1}}\left|\psi_t\right|^{\frac{p}{p-1}}d\eta dt\leq CT\,^{1-\frac{p}{p-1}}R\,^{Q},
\end{split}\end{equation*}
\begin{equation*}\label{CC3}\begin{split}
\int_\Omega\psi^{-\frac{1}{p-1}}\left|\Delta_\mathbb{H}\psi\right|&^{\frac{p}{p-1}}d\eta dt\leq CT(\ln R)^{-\frac{Q}{2}} \end{split}\end{equation*}and
\begin{equation*}\begin{split}
\int_0^T\int_{\mathbb{H}^{N}} f\psi d\eta dt&=\int_0^T\int_{\mathbb{H}^{N}} f\varphi(\eta)\mu\left(\frac{t}{T}\right)d\eta dt=CT\int_{\mathbb{H}^N} f\varphi d\eta.\end{split}\end{equation*}
Combining the last results, we can rewrite the inequality \eqref{CC1} in the form
\begin{equation*}\begin{split}
\int_{\mathbb{H}^N} f\,\varphi d\eta &\leq C\biggl(T\,^{-\frac{p}{p-1}}R\,^{Q}+(\ln R)^{-\frac{Q}{2}}\biggr).\end{split}\end{equation*}  
Since $\large\displaystyle p=\frac{Q}{Q-2}$, it follows that 
\begin{equation*}\begin{split}
\int_{\mathbb{H}^N} f\,\varphi d\eta &\leq C\biggl(T\,^{-\frac{Q}{2}}R\,^{Q}+(\ln R)^{-\frac{Q}{2}}\biggr).\end{split}\end{equation*}
Next, for $T=R^j,\,j>0$, we have
\begin{equation}\label{CC5}\begin{split}
\int_{\mathbb{H}^N} f\,\varphi d\eta &\leq C\biggl(R\,^{Q(1-\frac{j}{2})}+(\ln R)^{-\frac{Q}{2}}\biggr).\end{split}\end{equation}
Choosing $j>3$ and passing to the limit as $R\to\infty$ in \eqref{CC5} we obtain
\begin{equation*}
\int_{\mathbb{H}^N} f\,\varphi d\eta \leq 0,\end{equation*}
which is a contradiction, in view of Lemma \ref{TF}.

$\bullet$ \textbf{The case} $\large \displaystyle p>p_S=\frac{Q}{Q-2}$. Suppose that $f(\eta)>0.$ Then the equation \eqref{02} can be rewritten as
\begin{equation*}u_t-\Delta_{\mathbb{H}}u-|u|^p>0.\end{equation*}
It is known that (see \cite{Birindelli}), when $p>\frac{Q}{Q-2}$ the equation
$$-\Delta _{\mathbb{H}}v(\eta)-|v(\eta)|^p>0,\,\,\,\eta\in\mathbb{H}^N,$$ has a positive solution.
Finally, we see that $v$ is a stationary positive super solution to the problem \eqref{02}. This completes the proof. \end{proof}
\section{Life span of local in time solutions}
In this section, we study the upper bound estimate for local in time solutions to the problem \eqref{02}.
\\Assume that $f\in L^1(\mathbb{H}^N)$ satisfies
\begin{equation}\label{D1}
f(\eta)\geq \varepsilon|\eta|_{\mathbb{H}}^{-2\lambda},\,\,\varepsilon>0,\,\,\,|\eta|_{\mathbb{H}}>1,\,\,\,\frac{Q}{2}<\lambda<\frac{p}{p-1},\end{equation}
where $|\eta|_{\mathbb{H}}$ is the distance given by \eqref{HDO}.
\begin{theorem}
Suppose that \eqref{D1} holds and $\large\displaystyle1<p<\frac{Q}{Q-2}$. Let $u_\varepsilon$ be a  local in time mild solution solution of the problem \eqref{02} on $[0,T_\varepsilon)$. Then 
$$T_\varepsilon\leq C\varepsilon^\frac{1}{\mu},\,\,\text{with}\,\,\mu=\lambda-\frac{p}{p-1}<0,$$
where $C$ is a positive constant.
\end{theorem}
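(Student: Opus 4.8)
The plan is to run the same nonlinear capacity / rescaled test-function argument as in the subcritical case of Theorem \ref{T1}, but now keeping careful track of how the constants depend on $\varepsilon$ and on the scaling parameter $T$, and then to optimize in $T$. Since $u_\varepsilon$ is a mild solution on $[0,T_\varepsilon)$, by Lemma \ref{MWSL} it is a weak solution on $[0,T]$ for every $T<T_\varepsilon$, so the weak formulation \eqref{WS2} is available with the test function $\psi=\xi(t)\varphi(\eta)=\mu(t/T)\Phi(|\eta|^4_{\mathbb{H}}/T^2)$ from \eqref{test1}. The starting point is the inequality obtained exactly as in \eqref{N04}: since $u_0\geq 0$,
\begin{equation*}
\int_0^T\!\!\int_{\mathbb{H}^N} f\psi\, d\eta\,dt \;\leq\; C\int_\Sigma \psi^{-\frac{1}{p-1}}|\psi_t|^{\frac{p}{p-1}}\,d\eta\,dt \;+\; C\int_\Omega \psi^{-\frac{1}{p-1}}|\Delta_{\mathbb{H}}\psi|^{\frac{p}{p-1}}\,d\eta\,dt.
\end{equation*}
By the scaling computation already carried out in the proof of Theorem \ref{T1} (the change of variables \eqref{CV}), both terms on the right are bounded by $C\,T^{-\frac{p}{p-1}+\frac{Q}{2}+1} = C\,T^{\lambda_0+1}$ where $\lambda_0 = -\frac{p}{p-1}+\frac{Q}{2}<0$ since $p<Q/(Q-2)$; note $\lambda_0+1 = \frac{Q}{2}+1-\frac{p}{p-1}$.

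The new ingredient is a lower bound on the left side coming from \eqref{D1}. On the region $|\eta|_{\mathbb{H}}>1$ we have $f(\eta)\geq \varepsilon|\eta|_{\mathbb{H}}^{-2\lambda}$, and on the support of $\varphi$ (roughly $|\eta|^4_{\mathbb{H}}\leq 2T^2$, i.e. $|\eta|_{\mathbb{H}}\lesssim T^{1/2}$) we may estimate, for $T$ large,
\begin{equation*}
\int_{\mathbb{H}^N} f\varphi\, d\eta \;\geq\; \varepsilon \int_{1<|\eta|_{\mathbb{H}}<\sqrt{T}} |\eta|_{\mathbb{H}}^{-2\lambda}\,d\eta \;\geq\; c\,\varepsilon\, T^{\frac{Q-2\lambda}{4}\cdot 2}\;=\;c\,\varepsilon\,T^{\frac{Q}{2}-\lambda},
\end{equation*}
using that in Heisenberg polar coordinates $\int_{r<\rho} r^{-2\lambda}\,d\eta \sim \rho^{Q-2\lambda}$ in the $|\eta|_{\mathbb{H}}=r$ variable, and here $|\eta|_{\mathbb{H}}<\sqrt T$ corresponds to $r<T^{1/2}$ so $\rho^{Q-2\lambda}=T^{(Q-2\lambda)/2}$; since $\lambda<p/(p-1)\le Q/2$ in the relevant range — more precisely $\lambda>Q/2$ is allowed, so one must be careful: when $\lambda>Q/2$ the integral $\int_{|\eta|>1}|\eta|^{-2\lambda}d\eta$ converges and one instead uses $\int_{|\eta|_{\mathbb H}>1} f\varphi\,d\eta \ge \varepsilon\int_{1<|\eta|_{\mathbb H}<2} |\eta|_{\mathbb H}^{-2\lambda}\,d\eta = c\varepsilon$ for all large $T$ (since $\varphi\equiv 1$ near the unit ball). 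So the clean lower bound to use is simply $\int_{\mathbb{H}^N} f\varphi\,d\eta \geq c\varepsilon$. The time integral contributes a factor $\int_0^T\mu(t/T)\,dt = CT$, giving $\int_0^T\!\int_{\mathbb H^N} f\psi\,d\eta\,dt \ge c\,\varepsilon\, T$.

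Combining the two bounds yields $c\,\varepsilon\, T \leq C\,T^{\lambda_0+1}$, i.e. $\varepsilon \leq C\,T^{\lambda_0} = C\,T^{-\frac{p}{p-1}+\frac{Q}{2}}$. Now here one must match this with the exponent $\mu=\lambda-\frac{p}{p-1}$ in the statement; since $\lambda>Q/2$ is not assumed equal to $Q/2$, a sharper lower bound on $\int_{\mathbb H^N}f\varphi\,d\eta$ is needed to recover the stated $\mu$ rather than $\lambda_0$. The correct move is to not discard the decay of $f$: estimate $\int_0^T\!\int_{\mathbb H^N} f\psi\,d\eta\,dt \ge c\varepsilon\, T \int_{1<|\eta|_{\mathbb H}<\sqrt T}|\eta|_{\mathbb H}^{-2\lambda}d\eta$ and, using the layer-cake/polar formula $d\eta \sim r^{Q-1}dr$ in $r=|\eta|_{\mathbb H}$ up to a bounded angular factor (cf. \eqref{AZQ}), get $\int_{1}^{\sqrt T} r^{Q-1-2\lambda}dr \sim T^{(Q-2\lambda)/2}$ when $2\lambda<Q$ and $\sim$ const when $2\lambda>Q$; feeding the borderline-consistent bound $c\varepsilon T^{1+(Q-2\lambda)/2}$ (valid as a lower bound in the regime $\lambda<Q/2$, and replaced by $c\varepsilon T$ otherwise) into $\le C T^{\lambda_0+1}$ gives $\varepsilon \le C T^{\lambda_0 - (Q-2\lambda)/2} = C T^{\lambda - p/(p-1)} = C T^{\mu}$. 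Since $\mu<0$, this forces $T^{\mu}\geq c\varepsilon$, i.e. $T \leq C\varepsilon^{1/\mu}$. Applying this for every $T<T_\varepsilon$ and letting $T\uparrow T_\varepsilon$ gives $T_\varepsilon \leq C\varepsilon^{1/\mu}$, as claimed.

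The main obstacle is the bookkeeping of exponents: getting the power of $T$ in the lower bound for $\int f\varphi$ to be exactly $1+\frac{Q}{2}-\lambda$ (not merely $1$) so that it cancels against $\lambda_0+1$ from the right-hand side and produces the sharp exponent $\mu=\lambda-\frac{p}{p-1}$; this requires the hypothesis $\lambda<\frac{p}{p-1}$ (so that $\mu<0$, making the limiting argument go the right way) together with $\frac{Q}{2}<\lambda$ (which guarantees $f\in L^1$, consistent with the standing assumption, and controls the sign/size of the lower integral). A secondary technical point is justifying that the mild solution may be used in the weak formulation on $[0,T]$ for all $T<T_\varepsilon$, which is exactly Lemma \ref{MWSL} combined with the fact that $u_\varepsilon\in C([0,T];L^\infty(\mathbb H^N))$ for such $T$, so that all the integrals appearing are finite.
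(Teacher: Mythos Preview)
Your overall approach is the same as the paper's: use the rescaled test function \eqref{test1}, derive \eqref{N04}, and compare the two sides as powers of the scaling parameter. The place where your argument becomes unreliable is the lower bound on $\int_{\mathbb{H}^N} f\varphi\,d\eta$. You correctly observe that under the standing hypothesis $\lambda>\tfrac{Q}{2}$ the integral $\int_{1}^{\sqrt{T}} r^{Q-1-2\lambda}\,dr$ converges to a constant, so your polar-coordinate computation only yields $\int f\varphi\ge c\varepsilon$, not $c\varepsilon\,T^{(Q-2\lambda)/2}$. You then nevertheless ``feed in'' the bound $c\varepsilon\,T^{1+(Q-2\lambda)/2}$, which you yourself flagged as coming from the regime $\lambda<\tfrac{Q}{2}$. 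As written this is a gap: you use a formula outside its stated range of validity without explanation. (It can be rescued by the trivial remark that for $\lambda>\tfrac{Q}{2}$ and $T\ge 1$ one has $c\varepsilon\ge c\varepsilon\,T^{(Q-2\lambda)/2}$, so the weaker bound still holds; but you do not say this, and the surrounding discussion suggests you did not see it.)

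The paper avoids this detour entirely. It performs the Heisenberg dilation change of variables $\tilde\eta=\delta_{T_\varepsilon^{-1/2}}(\eta)$ \emph{before} estimating: then $d\eta=T_\varepsilon^{Q/2}d\tilde\eta$, the cutoff $\Phi(|\eta|_{\mathbb H}^4/T_\varepsilon^2)$ becomes $\Phi(|\tilde\eta|_{\mathbb H}^4)$ with \emph{fixed} support, and on $|\tilde\eta|_{\mathbb H}$ bounded below by a fixed constant one has $f\ge \varepsilon\,T_\varepsilon^{-\lambda}|\tilde\eta|_{\mathbb H}^{-2\lambda}$. This immediately gives
\[
\int_{\mathbb H^N} f\varphi\,d\eta \;\ge\; c\,\varepsilon\,T_\varepsilon^{\frac{Q}{2}-\lambda},
\]
with the constant $c$ equal to a fixed integral over a fixed annulus, no case analysis on $\lambda$ required. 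Combining with $\int f\varphi\le C T_\varepsilon^{\frac{Q}{2}-\frac{p}{p-1}}$ yields $\varepsilon\le C T_\varepsilon^{\mu}$ and hence $T_\varepsilon\le C\varepsilon^{1/\mu}$. Your final paragraph about letting $T\uparrow T_\varepsilon$ via Lemma~\ref{MWSL} is fine and matches the paper's use of the weak formulation.
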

\begin{proof} We take for $T_\varepsilon>0$ the test function in the following form $$\psi=\mu(t)\varphi(\eta)=\mu\left(\frac{t}{T_\varepsilon}\right)\Phi\left(\frac{|\eta|^4_{\mathbb{H}}}{T_\varepsilon^2}\right),$$ where the functions $\Phi,\mu\in C^\infty
(\mathbb{R}^+)$ is the same as  Theorem \ref{T1}. \\Therefore, instead of the estimate \eqref{N08} we have on $ [0,T_\varepsilon]\times\mathbb{H}^N$, that 
\begin{equation}\label{ZXQ}\begin{split}
\int_{\mathbb{H}^N} f\varphi d\eta &\leq CT_
\varepsilon^{-\frac{p}{p-1}+\frac{Q}{2}}.  
\end{split}\end{equation}
Next, recalling the condition \eqref{D1} and changing the variables by 
\begin{equation*}
\Tilde{x}=\frac{x}{T^\frac{1}{2}_
\varepsilon}, \Tilde{y}=\frac{y}{T^\frac{1}{2}_
\varepsilon}, \Tilde{\tau}=\frac{\tau}{T_
\varepsilon}, 
\end{equation*}
and $\Tilde{\eta}=(\Tilde{x}, \Tilde{y}, \Tilde{\tau})$, we obtain
\begin{equation}\begin{split}\label{LS02} 
\int_{\mathbb{H}^{N}}f\varphi d\eta&=T_\varepsilon^\frac{Q}{2}\int_{\mathbb{H}^N}f(\Tilde{\eta})\Phi(\Tilde{\eta})d\Tilde{\eta} 
 \\&\geq  \varepsilon T_\varepsilon^{\frac{Q}{2}-\lambda}\int_{|\Tilde{\eta}|_{\mathbb{H}}\geq \frac{1}{T_\varepsilon}}|\Tilde{\eta}|_{\mathbb{H}}^{-2\lambda}\Phi(\Tilde{\eta})d\Tilde{\eta} 
 \\&\geq  \varepsilon T_\varepsilon^{\frac{Q}{2}-\lambda}\int_{|\Tilde{\eta}|_{\mathbb{H}}\geq \frac{1}{T_0}}|\Tilde{\eta}|_{\mathbb{H}}^{-2\lambda}\Phi(\Tilde{\eta})d\Tilde{\eta},\end{split}
\end{equation}
where $T_0\in (0, T_
\varepsilon]$ is a constant, independent of $T_
\varepsilon$ and $\varepsilon$.
\\Therefore, using \eqref{LS02} and \eqref{ZXQ} we arrive at
\begin{equation}\label{L1}\begin{split}
\varepsilon \leq CT_
\varepsilon^{-\frac{p}{p-1}+\lambda} .  
\end{split}\end{equation}
Since $\large\displaystyle\lambda<\frac{p}{p-1}$, it follows that $\large\displaystyle\mu=\lambda-\frac{p}{p-1}<0.$

Finally, we obtain $T_\varepsilon\leq C\varepsilon^\frac{1}{\mu},$
with a constant $C>0$.\end{proof}

\section*{Acknowledgments}
The authors would like to thank Durvudkhan Suragan and Bharat Talwar for their constructive comments
that helped to improve the quality of the manuscript.

\section*{Funding}
This research has been funded by the Science Committee of the Ministry of Education and Science of the Republic of Kazakhstan (Grant No. AP14869090) and by the FWO Odysseus 1 grant G.0H94.18N: Analysis and Partial Differential Equations and by the Methusalem programme of the Ghent University Special Research Fund (BOF) (Grant number 01M01021). Michael Ruzhansky is also supported by EPSRC grant EP/R003025/2. No new data was collected or generated during the course of research.


\begin{thebibliography}{99}

\bibitem{Kirane 1} B. Ahmad, A. Alsaedi, M. Kirane. Nonexistence of global solutions of some nonlinear space-nonlocal evolution equations on the Heisenberg group. {\it Electron. J. Differential Equations}. 2015:227, 1--10 (2015).

\bibitem{Kirane 2} A. Alsaedi, B. Ahmad, M. Kirane, M. Al-Yami. Nonexistence results for higher order pseudo-parabolic equations in the Heisenberg group. {\it  Math. Methods Appl. Sci.} 40:4, 1280--1287 (2017).

\bibitem{Bandle} C. Bandle, H. A. Levine, Q. Zhang. Critical exponents of Fujita type for inhomogeneous parabolic equations and systems. {\it J. Math. Anal. Appl.} 251, 624--648 (2000).

\bibitem{Birindelli} I. Birindelli, I. Capuzzo Dolcetta, A. Cutri. Liouville theorems for semilinear equations on the Heisenberg group. {\it Ann. Inst. Henri Poincar\'{e}.} 14:3, 295--308 (1997).

\bibitem{D'Ambrosio} L. D'Ambrosio. Critical degenerate inequalities on the Heisenberg group. {\it Manuscripta Math.} 106, 519--536 (2001).

\bibitem{D'Ambrosio1} L. D'Ambrosio, S. Lucente. Nonlinear Liouville theorems for Grushin and Tricomi operators. {\it J. Differential Equations}. 193:2, 511--541 (2003). 

\bibitem{Fujita1} H. Fujita. On the blowing up of solutions of the Cauchy problem for $u_t=\triangle u+u^{1+\alpha}$. {\it J. Fac. Sci. Univ. Tokyo Sect.} 13, 109--124 (1966).

\bibitem{Georgiev1} V. Georgiev, A. Palmieri. Critical exponent of Fujita-type for the semilinear damped wave equation on the Heisenberg group with power nonlinearity.
{\it J. Differential Equations}. 269:1, 420--448 (2020).

\bibitem{Georgiev} V. Georgiev, A. Palmieri. Lifespan estimates for local in time solutions to the semilinear heat equation on the Heisenberg group. {\it Ann. Mat. Pura Appl.} 200, 999--1032 (2021).

\bibitem{Garofalo} N. Garofalo, E. Lanconelli. Existence and nonexistence results for semilinear equations on the Heisenberg group. {\it Indiana Univ. Math. J.} 41:1, 71--98 (1992).

\bibitem{Hayakawa} K. Hayakawa. On Nonexistence of global solutions of some semilinear parabolic differential equations. {\it Proc. Japan Acad.} 49:7, 503--595 (1973).

\bibitem{Kirane 4} M. Jleli, M. Kirane, B. Samet. Nonexistence results for pseudo-parabolic equations in the Heisenberg group. {\it Monatsh. Math.} 180, 255--270 (2016).

\bibitem{Jleli} M. Jleli, T. Kawakami, B. Samet. Critical behavior for a semilinear parabolic equation with forcing term depending on time and space. {\it J. Math. Anal. Appl.} 486:2, 123931 (2020).


\bibitem{Kobayashi} K. Kobayashi, T. Sirao, H. Tanaka. On the growing up problem for semilinear heat equations. {\it J. Math. Soc. Japan.} 29:3, 407--424 (1977).

\bibitem{Pascucci} A. Pascucci. Semilinear equations on nilpotent Lie groups: global existence and blow-up of solutions. {\it Le Matematiche}. 53:2, 345--357 (1998).

\bibitem{Pokhozaev1} S. I. Pokhozhaev.  Nonexistence of global solutions of nonlinear evolution equations. {\it Differ. Equ.} 49, 599--606 (2013).

\bibitem{Pohozaev}  S. I. Pohozaev, L. V\'{e}ron. Nonexistence results of solutions of semilinear differential inequalities on the Heisenberg group. {\it Manuscripta Math.} 102, 85--99 (2000). 

\bibitem{Ruzhansky}  M. Ruzhansky, M. Yessirkegenov. Existence and non-existence of global solutions for semilinear heat equations and inequalities on sub-Riemannian manifolds, and Fujita exponent on unimodular Lie groups. {\it J. Differential Equations.} 308, 455--473 (2022). 

\bibitem{Soup} P. Quittner, P. Souplet, {\it Superlinear Parabolic Problems, Blow-Up, Global Existence and Steady States}, Second ed., Birkhäuser, (2019).

\bibitem{Weissler}  F. B. Weissler. Existence and nonexistence of global solutions for a semilinear heat equation. {\it Isr. J. Math.} 38, 29--40 (1981).  









 













\end{thebibliography}
\end{document}